\documentclass{amsart}

\usepackage{amsfonts,amsmath,amssymb} 
\usepackage[mathcal]{euscript}

\usepackage{graphicx}
\usepackage[T1]{fontenc}
\usepackage{enumerate}
\usepackage{tikz}
\usetikzlibrary{calc}
\usepackage{textcomp}  
\newtheorem{theorem}{Theorem}[section]
\newtheorem{lemma}{Lemma}[section]

\theoremstyle{definition}
\newtheorem{definition}{Definition}[section]
\newtheorem{property}{Property}
\newtheorem{example}{Example}[section]

\DeclareMathOperator{\arsinh}{arsinh}
\DeclareMathOperator{\artanh}{artanh}

\begin{document}
\title{On harmonic representation of means}

\author{Alfred Witkowski}
\address{Institute of Mathematics and Physics\\University of Technology and Life Sciences\\Al. prof. Kaliskiego 7\\85-796 Bydgoszcz, Poland}
\email{alfred.witkowski@utp.edu.pl}
\subjclass[2000]{26D15}
\keywords{Seiffert mean, logarithmic mean, Seiffert , harmonic representation, AGM mean}
\date{October 10, 2013}
\begin{abstract}
We characterize  continuous, symmetric and homogeneous  means $M$ that can be represented in the form
\begin{equation*}
\frac{1}{ M(x,y)}=\int_0^1 \frac{dt}{N\left(\tfrac{x+y}{2}-t\tfrac{x-y}{2},\tfrac{x+y}{2}+t\tfrac{x-y}{2}\right)}.
\end{equation*} 
New inequalities for means are derived from such representation.

\end{abstract}
\maketitle
\section{Introduction, Definitions and  notation}

In paper \cite{AW2} we investigated the representation of a symmetric, homogeneous mean $M:\mathbb{R}_{+}^2\to\mathbb{R}$ of the form 
\begin{equation}
M(x,y)=\frac{|x-y|}{2f\left(\frac{|x-y|}{x+y}\right)}
\label{eq:main}
\end{equation}
The main observation was that every symmetric, homogeneous mean admits such a representation. The mapping
\begin{equation}
M(x,y)\leftrightarrow f_M(z)=\frac{z}{M(1-z,1+z)}
\label{eq:mapping}
\end{equation}
establishes one-to-one correspondence between the set of symmetric homogeneous means and the set of functions $f:(0,1)\to\mathbb{R}$ satisfying 
\begin{equation}
\frac{z}{1+z}\leq f(z)\leq \frac{z}{1-z},
\label{eq:ineqmain}
\end{equation}
called \textbf{Seiffert functions}, and the identity
\begin{equation}
M(x,y)=\frac{|x-y|}{2f_M\left(\frac{|x-y|}{x+y}\right)}
\label{eq:mainM}
\end{equation}
holds. Moreover, the formula \eqref{eq:main}  transforms Seiffert function into a symmetric, homogeneous mean. \\
Note that the outermost functions in \eqref{eq:ineqmain} correspond to $\max$ and $\min$ means.

In this note we discuss the representation of means in the form
$$\frac{1}{M(x,y)}=\int_0^1\frac{dt}{N\left(\tfrac{x+y}{2}+t\tfrac{x-y}{2},\tfrac{x+y}{2}-t\tfrac{x-y}{2}\right)},$$
where $N$ is also a homogeneous, symmetric mean.

We shall be using two facts from \cite{AW2}
\begin{property}\cite[Section 7]{AW2}\label{prop:1}
	If $f$ is a Seiffert mean, then for arbitrary $0<t\leq 1$ the function $f^{\{t\}}$ given by the formula $f^{\{t\}}(z)=\frac{f(tz)}{t}$ is also a Seiffert mean.
\end{property}
\begin{lemma}
	If $f$ is a Seiffert function corresponding to the mean $M$, then $f^{\{t\}}$ is a Seiffert function for 
	$$M^{\{t\}}(x,y)=M\left(\tfrac{x+y}{2}+t\tfrac{x-y}{2},\tfrac{x+y}{2}-t\tfrac{x-y}{2}\right).$$
\end{lemma}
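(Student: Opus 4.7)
The plan is to unpack the definition of the mapping \eqref{eq:mapping} and verify directly that applying it to $M^{\{t\}}$ returns $f^{\{t\}}=f_M^{\{t\}}$. Since Property~\ref{prop:1} already guarantees that $f^{\{t\}}$ is a Seiffert function, the only thing that actually needs checking is the identity $f_{M^{\{t\}}}=f_M^{\{t\}}$, together with the preliminary fact that $M^{\{t\}}$ really is a symmetric, homogeneous mean so that the correspondence \eqref{eq:mapping}--\eqref{eq:mainM} applies.

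First, I would establish the background properties of $M^{\{t\}}$. Symmetry follows because swapping $x$ and $y$ changes the sign of $\tfrac{x-y}{2}$, which merely permutes the two arguments of $M$; combined with symmetry of $M$, this leaves $M^{\{t\}}$ unchanged. Homogeneity is inherited from $M$ since both expressions $\tfrac{x+y}{2}\pm t\tfrac{x-y}{2}$ are homogeneous of degree one in $(x,y)$. Finally, for $x\le y$ and $t\in(0,1]$, both arguments $\tfrac{x+y}{2}\pm t\tfrac{x-y}{2}$ lie in $[x,y]$, so applying $M$ keeps the value inside $[x,y]$ and $M^{\{t\}}$ is indeed a mean.

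Next comes the key computation. Substituting $x=1-z$, $y=1+z$ gives $\tfrac{x+y}{2}=1$ and $\tfrac{x-y}{2}=-z$, so
\begin{equation*}
M^{\{t\}}(1-z,1+z)=M(1-tz,1+tz).
\end{equation*}
Applying the defining formula \eqref{eq:mapping} to $M^{\{t\}}$ then yields
\begin{equation*}
f_{M^{\{t\}}}(z)=\frac{z}{M^{\{t\}}(1-z,1+z)}=\frac{z}{M(1-tz,1+tz)}=\frac{1}{t}\cdot\frac{tz}{M(1-tz,1+tz)}=\frac{f_M(tz)}{t}=f_M^{\{t\}}(z).
\end{equation*}
By Property~\ref{prop:1}, $f_M^{\{t\}}$ is a Seiffert function, so this equality shows it is the Seiffert function of $M^{\{t\}}$, finishing the proof. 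There is no real obstacle here; the lemma amounts to a careful unpacking of definitions, and the only subtlety is verifying that the substitution $x=1-z$, $y=1+z$ collapses $M^{\{t\}}$ to $M(1-tz,1+tz)$ cleanly.
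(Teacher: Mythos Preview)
Your proof is correct and essentially the same as the paper's: both amount to unpacking the bijection \eqref{eq:mapping}--\eqref{eq:mainM}, the only difference being that the paper applies \eqref{eq:main} to $f^{\{t\}}$ and obtains $M^{\{t\}}$, while you apply \eqref{eq:mapping} to $M^{\{t\}}$ and obtain $f^{\{t\}}$. Your explicit verification that $M^{\{t\}}$ is a symmetric homogeneous mean is a pleasant extra but is not strictly needed, since Property~\ref{prop:1} together with the fact that \eqref{eq:main} sends Seiffert functions to means already guarantees this.
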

\begin{proof}
	Let $z=\frac{|x-y|}{x+y}$. Then by  \eqref{eq:main} and \eqref{eq:mapping} we have
	\begin{align*}\label{eq:}
		\frac{|x-y|}{2f^{\{t\}}(z)}&=\frac{t|x-y|}{2f(tz)}=	\frac{t|x-y|M(1-tz,1+tz)}{2tz}\\
		&	=\frac{x+y}{2}M\left(1-t\frac{|x-y|}{x+y},1+t\frac{|x-y|}{x+y}\right)=M^{\{t\}}(x,y).\qedhere
	\end{align*}
\end{proof}

Following \cite[Section 5]{AW2}, consider the integral operator  on the set of continuous Seiffert functions, defined as
\begin{equation}
I(f)(z)=\int_0^z\frac{f(u)}{u}du.
\label{eq:IO}
\end{equation}
\begin{property}
	The operator $I$ has the following properties:
	\begin{itemize}
		\item is monotone - if $f\leq g$, then $I(f)\leq I(g)$,
		\item preserves convexity - if $f$ is convex, then so is $I(f)$ and for all $0<z<1$ the inequalities $z\leq I(f)(z)\leq f(z)$ hold, (\cite[Theorem 5.1]{AW2}),
		\item preserves concavity - if $f$ is concave, then so is $I(f)$ and for all $0<z<1$ the inequalities $z\geq I(f)(z)\geq f(z)$ hold, (\cite[Theorem 5.1]{AW2}),
		\item $I(f)$ is a Seiffert function, (\cite[Corollary 5.1]{AW2}).
	\end{itemize}
\end{property}

The next simple theorem characterizes the functions, which are of the form $I(f)$.
\begin{theorem}\label{thm:condition to be I(f)}
	Let $g$ be a real function defined on the interval $(0,1)$. The following conditions are equivalent
	\begin{itemize}
		\item $\lim_{z\to 0} g(z)=0$, $g$ is continuously differentiable, and for all $0<z<1$
		\begin{equation}
		\frac{1}{1+z}\leq g'(z) \leq \frac{1}{1-z},
		\label{ineq:g'}
		\end{equation}
		\item there exist a continuous Seiffert function $f$ such that $g=I(f)$.
	\end{itemize}
\end{theorem}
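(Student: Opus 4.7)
The plan is to prove the equivalence by observing that if $g=I(f)$, then the Fundamental Theorem of Calculus gives $g'(z)=f(z)/z$, so the two bullet points are essentially the same condition stated in terms of $f$ and in terms of $g=I(f)$. The argument then splits into two short implications.

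For the forward direction, I start with a $g$ satisfying the first bullet and define $f(z)=zg'(z)$. The continuity of $f$ follows from the continuous differentiability of $g$, and the Seiffert bounds $\frac{z}{1+z}\le f(z)\le\frac{z}{1-z}$ are obtained by multiplying the bounds on $g'(z)$ in \eqref{ineq:g'} by $z>0$. It then remains to verify that $g=I(f)$: by construction $\frac{f(u)}{u}=g'(u)$, so
\begin{equation*}
I(f)(z)=\int_0^z\frac{f(u)}{u}\,du=\int_0^z g'(u)\,du=g(z)-\lim_{u\to 0^+}g(u)=g(z),
\end{equation*}
where the last equality uses the hypothesis $\lim_{z\to 0}g(z)=0$.

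For the backward direction, I assume $f$ is a continuous Seiffert function and set $g=I(f)$. The bounds \eqref{eq:ineqmain} imply that $\frac{f(u)}{u}$ is bounded (in fact it tends to $1$) near $0$, so the integral in \eqref{eq:IO} converges and $\lim_{z\to 0}g(z)=0$. Continuous differentiability of $g$ and the formula $g'(z)=f(z)/z$ come from the Fundamental Theorem of Calculus applied to the continuous integrand $f(u)/u$. Dividing the Seiffert inequalities \eqref{eq:ineqmain} for $f$ by $z$ yields precisely \eqref{ineq:g'}.

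There is no real obstacle here; the proof is essentially a dictionary translation via $f(z)\leftrightarrow zg'(z)$. The only point that requires a word of care is that the improper integral defining $I(f)(z)$ actually converges and vanishes as $z\to 0^+$, which is immediate from the uniform boundedness of $f(u)/u$ on a neighbourhood of $0$ provided by the lower and upper Seiffert bounds.
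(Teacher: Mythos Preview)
Your proof is correct and follows essentially the same route as the paper: define $f(z)=zg'(z)$ for one direction and invoke the Fundamental Theorem of Calculus together with $g'(z)=f(z)/z$ for the other, with the Seiffert bounds translating into the bounds on $g'$ and back. If anything, you supply more detail than the paper does, in particular the remark that the improper integral near $0$ converges because $f(u)/u$ stays bounded there.
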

\begin{proof}
	Multiplying \eqref{ineq:g'} by $z$ we see that $f(z)=zg'(z)$ is a continuous Seiffert function and clearly $I(f)=g$.\\
Conversely, if $f$ is continuous, then $g=I(f)$ is differentiable. Since $\lim_{z\to 0} f(z)/z=1$ we claim $\lim_{z\to 0} g(z)=0$. Differentiating $g$ we obtain $g'(z)=f(z)/z$,  which yields \eqref{ineq:g'} because $f$ fulfills \eqref{eq:ineqmain}.
\end{proof}

Now we are ready to formulate the main result of this note.
\section{Harmonic representation of means}
\begin{definition}
	We say that a continuous mean $N$ is a harmonic representation of mean $M$ if 
	$$\frac{1}{M(x,y)}=\int_0^1 \frac{dt}{N^{\{t\}}(x,y)}.$$
\end{definition}
\begin{theorem}\label{thm:condition to have harm repr}
	A continuous mean $M$ admits a harmonic representation if and only if its Seiffert function $m$ can be represented as $I(n)$, where $n$ is a continuous Seiffert function. 
\end{theorem}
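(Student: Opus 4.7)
The plan is to translate the harmonic-representation identity, which lives at the level of means, into an identity at the level of Seiffert functions, and then recognize that identity as precisely the operator $I$ applied to the Seiffert function of $N$.

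First, I would set $z=\frac{|x-y|}{x+y}$ and use the lemma in the preceding subsection to compute the Seiffert function of $N^{\{t\}}$. Writing $a=\tfrac{x+y}{2}+t\tfrac{x-y}{2}$ and $b=\tfrac{x+y}{2}-t\tfrac{x-y}{2}$, one has $a+b=x+y$ and $|a-b|=t|x-y|$, hence $\frac{|a-b|}{a+b}=tz$. Applying \eqref{eq:mainM} to $N$ gives
\[
N^{\{t\}}(x,y)=N(a,b)=\frac{t|x-y|}{2\,n(tz)},
\]
so that $\frac{1}{N^{\{t\}}(x,y)}=\frac{2n(tz)}{t|x-y|}$. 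Combining this with $\frac{1}{M(x,y)}=\frac{2m(z)}{|x-y|}$, the definition of a harmonic representation reduces, after cancelling the common factor $2/|x-y|$, to
\[
m(z)=\int_0^1\frac{n(tz)}{t}\,dt.
\]

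Second, I would perform the substitution $u=tz$ on the right-hand side, which turns the integral into $\int_0^z n(u)/u\,du = I(n)(z)$. This gives the key equivalence
\[
\frac{1}{M(x,y)}=\int_0^1\frac{dt}{N^{\{t\}}(x,y)} \iff m = I(n),
\]
valid for any pair of continuous symmetric homogeneous means $M,N$ with Seiffert functions $m,n$.

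With this equivalence in hand both directions of the theorem are immediate. For $(\Leftarrow)$, if $m=I(n)$ for some continuous Seiffert function $n$, let $N$ be the mean whose Seiffert function is $n$ (which exists and is continuous by the one-to-one correspondence \eqref{eq:mapping}); then the above equivalence produces the harmonic representation. For $(\Rightarrow)$, if $M$ admits a harmonic representation via a continuous mean $N$, let $n$ be its Seiffert function; the equivalence then forces $m=I(n)$. There is no real obstacle here: once the reduction to Seiffert functions is made, everything is a clean change of variables, and the substantive content lies in matching the operator in \eqref{eq:IO} with the integral defining a harmonic representation.
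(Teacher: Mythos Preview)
Your argument is correct and is essentially the paper's own proof: both reduce the harmonic-representation identity to the Seiffert-function identity $m(z)=\int_0^1 n(tz)/t\,dt$ via \eqref{eq:mainM} and the lemma on $M^{\{t\}}$, then identify that integral with $I(n)(z)$ by the substitution $u=tz$. The only cosmetic difference is that the paper packages $n(tz)/t$ as $n^{\{t\}}(z)$ using Property~\ref{prop:1}, whereas you compute it directly.
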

\begin{proof}
	Let $N$ be the harmonic representation of $M$ and let $z=\frac{|x-y|}{x+y}$. Denote by $m$ and $n$ the Seiffert functions of $M$ and $N$ respectively. Applying \eqref{eq:main} and \eqref{eq:mapping} we have
	\begin{align*}
	\frac{2}{|x-y|}I(n)(z)	&=\frac{2}{|x-y|}\int_0^z\frac{n(u)}{u}du=\frac{2}{|x-y|}\int_0^1n^{\{t\}}(z)dt	\\
		&	=\int_0^1 \frac{dt}{N^{\{t\}}(x,y)}= \frac{1}{M(x,y)}=\frac{2}{|x-y|}m(z),
	\end{align*}
which yields $m=I(n)$.
Conversely, if $m=I(n)$ and $N$ is a mean corresponding to $n$, then 
	\begin{align*}
	\frac{1}{M(x,y)}&=\frac{2}{|x-y|}m(z)=\frac{2}{|x-y|}I(n)(z)	=\frac{2}{|x-y|}\int_0^z\frac{n(u)}{u}du\\
	&=\frac{2}{|x-y|}\int_0^1n^{\{t\}}(z)dt	
		=\int_0^1 \frac{dt}{N^{\{t\}}(x,y)}.
	\end{align*}
\end{proof}
From \eqref{eq:ineqmain} we obtain by integration the inequalities
\begin{equation}
\log (1+z)\leq I(f)(z)\leq -\log (1-z),
\label{eq:ineqI}
\end{equation}
which shows, that every mean admitting harmonic representation satisfies the inequalities
$$\frac{|x-y|}{2(\log A(x,y)-\log\min(x,y))}\leq M(x,y)\leq \frac{|x-y|}{2(\log\max(x,y)-\log A(x,y))}.$$ 
The inverse statement is not true. It is easy to construct a function satisfying \eqref{eq:ineqI} for which  \eqref{ineq:g'} fails.

\section{Examples I}
\begin{example}\label{ex:1}
	The Seiffert function of the Seiffert mean $P(x,y)=\frac{|x-y|}{2\arcsin z}$ is obviously $\arcsin$. Let $g(z)=\frac{z}{\sqrt{1-z^2}}$. Then $\arcsin=I(g)$ and $g$ is the Seiffert function of the geometric mean $G(x,y)=\sqrt{xy}$. Thus we obtain the identity
	$$P(x,y)=\left(\int_0^1\frac{dt}{G^{\{t\}}(x,y)}\right)^{-1}.$$
\end{example}
\begin{example}\label{ex:2}
	The second Seiffert mean is given by $T(x,y)=\frac{|x-y|}{2\arctan z}$. Let $C(x,y)=\frac{x^2+y^2}{x+y}$ be the contra-harmonic mean. Its Seiffert function is $c(z)=\frac{z}{1+z^2}$ and one can easily verify that $I(c)=\arctan$, so
		$$T(x,y)=\left(\int_0^1\frac{dt}{C^{\{t\}}(x,y)}\right)^{-1}.$$
\end{example}
\begin{example}\label{ex:3}
	For the logarithmic mean $L(x,y)=\frac{x-y}{\log x-\log y}=\frac{|x-y|}{2\artanh z}$ we get
	$$L(x,y)=\left(\int_0^1\frac{dt}{H^{\{t\}}(x,y)}\right)^{-1},$$
	where $H(x,y)=\frac{2xy}{x+y}$ denotes the harmonic mean.
\end{example}
\begin{example}\label{ex:4}
	The Seiffert function of the root-mean square $R=\sqrt{\frac{x^2+y^2}{2}}$ is the function $r(z)=\frac{z}{\sqrt{1+z^2}}$, thus $I(r)(z)=\arsinh z$, which in turn is the Seiffert mean of the Neuman-S\'andor mean $M(x,y)=\frac{|x-y|}{2\arsinh z}$, so
		$$M(x,y)=\left(\int_0^1\frac{dt}{R^{\{t\}}(x,y)}\right)^{-1},$$
\end{example}
In \cite{AW2} we have shown that $\sin$, $\tan$, $\sinh$ and $\tanh$ are also Seiffert function. Let us check if their corresponding means admit harmonic representations. To do it we shall use Theorems \ref{thm:condition to be I(f)} and \ref{thm:condition to have harm repr}
\begin{example}\label{ex:5}
	For $g(z)=\sin z$ we want to show that $g'$ satisfies \eqref{ineq:g'}. Obviously $\cos z<1<1/(1-z)$. To prove the other part observe that
\begin{align*}
(1+z)\cos z>&(1+z)(1-z^2/2)>1+z(1-z/2)>1,
\end{align*}	
thus \eqref{ineq:g'} holds, and one easily verifies that $z\cos z$ is the Seiffert function of the mean $M(x,y)=A(x,y)/\cos \frac{|x-y|}{x+y}$, which implies
$$\frac{x-y}{2\sin\frac{x-y}{x+y}}=\left(\int_0^1\frac{dt}{M^{\{t\}}(x,y)}\right)^{-1}.$$
\end{example}
\begin{example}\label{ex:6}
	Now let $g(z)=\tan z$. We have
	\begin{align*}
	\frac{1}{1+z}<1&<\frac{1}{\cos^2 z}=\frac{1}{(1+\sin z)(1-\sin z)}<\frac{1}{1-z},	
	\end{align*}
	so $z/\cos^2 z$ is the Seiffert function. It corresponds to the mean $M(x,y)=A(x,y)\cos^2  \frac{|x-y|}{x+y}$ and 
	$$\frac{x-y}{2\tan\frac{x-y}{x+y}}=\left(\int_0^1\frac{dt}{M^{\{t\}}(x,y)}\right)^{-1}.$$
\end{example}
\begin{example}\label{ex:7}
	With the hyperbolic sine the situation is simple. We have
	$$1<\cosh z=\sum_{m=0}^\infty\frac{z^{2m}}{(2m)!}<\sum_{m=0}^\infty z^m=\frac{1}{1-z},$$
	thus $z\cosh z$ is the Seiffert function, and its  mean $M(x,y)=A(x,y)/\cosh  \frac{|x-y|}{x+y}$ satisfies
$$\frac{x-y}{2\sinh\frac{x-y}{x+y}}=\left(\int_0^1\frac{dt}{M^{\{t\}}(x,y)}\right)^{-1}.$$
\end{example}
\begin{example}
	The last function is the hyperbolic tangent. Its derivative is $\cosh^{-2}z$ and $\cosh^{-2}(1)\approx 0.41997<\frac{1}{2}$, so the left inequality in \eqref{ineq:g'} does not hold, and this yields the mean $\frac{x-y}{2\sinh\frac{x-y}{x+y}}$ does not have a harmonic representation.
\end{example}
We leave as a simple exercise the fact that there is no harmonic representation of the geometric mean.
\section{The arithmetic-geometric mean \label{sec:AGM}}
 This section is devoted to the arithmetic-geometric mean given by the formula
$$AGM(x,y)=\left(\frac{2}{\pi}\int_0^{\pi/2}\frac{d\varphi}{\sqrt{x^2\cos^2\varphi+y^2\sin^2\varphi}}\right)^{-1}.$$
To find its Seiffert mean let us recall the famous result of Gauss \cite{Ga}
\begin{equation}
AGM(1-z,1+z)=\frac{\pi}{2K(z)},
\label{eq:Gaussidentity}
\end{equation}
where $K$ is the complete elliptic integral of the first kind
\begin{equation}
K(z)=\int_0^{\pi/2}\frac{d\varphi}{\sqrt{1-z^2\sin^2\varphi}}=\int_0^1\frac{dt}{\sqrt{1-t^2}\sqrt{1-z^2t^2}}.
\label{eq:K}
\end{equation}
Comparing \eqref{eq:Gaussidentity} and \eqref{eq:mapping} we see that $f_{AGM}(z)=\frac{2}{\pi}zK(z)$. We shall show that $AGM$ admits the harmonic representation. By Theorem \ref{thm:condition to be I(f)} it is enough to show that $f_{AGM}'$satisfies \eqref{ineq:g'}. To this end let us recall the power series expansion of $K$ (\cite[900.00]{BF})
\begin{equation}
K(z)=\frac{\pi}{2}\left(1+\sum_{m=1}^\infty \left[\frac{(2m-1)!!}{(2m)!!}\right]^2 z^{2m}\right).
\label{eq:powerseries for K}
\end{equation}
We have
\begin{align}\label{eq:f_AGM"' series}
	f_{AGM}'(z)&=\frac{2}{\pi}\left(K(z)+z\frac{dK}{dz}\right)=1+\sum_{m=1}^\infty (2m+1)\left[\frac{(2m-1)!!}{(2m)!!}\right]^2 z^{2m}	
\end{align}
Denoting the $m^{\rm{th}}$ coefficient in \eqref{eq:f_AGM"' series} by $c_m$ we see that
$$\frac{c_{m+1}}{c_m}=\frac{2m+3}{2m+1}\left[\frac{(2m+1)!!((2m)!!}{(2m+2)!!(2m-1)!!}\right]^2=\frac{(2m+1)(2m+3)}{(2m+2)^2}<1,$$
and since $c_1=3/4$ we conclude that $c_m<1$ for all $\geq 1$. Thus $1<f_{AGM}'(z)<1+z+z^2+\dots=1/(1-z)$.\\
Theorem \ref{thm:condition to be I(f)} implies that the arithmetic-geometric mean admits the harmonic representation. To derive its explicit form, recall  that the derivative of $K$ is given by  $K'(z)=\frac{E(z)}{z(1-z^2)}-\frac{K(z)}{z} $(see. e.g. \cite[710.00]{BF}), thus
\begin{align*}\label{eq:}
	zf_{AGM}'(z)&=\frac{2}{\pi}\left(zK(z)+z^2K'(z)\right)=\frac{2}{\pi}\frac{z}{1-z^2}E(z),
\end{align*}
($E(z)=\int_0^{\pi/2}\sqrt{1-z^2\sin^2\varphi}d\varphi$ is the complete elliptic integral of the second kind). 
As $\frac{z}{1-z^2}$ is the Seiffert function of the harmonic mean we obtain the formula
\begin{align*}
	V(x,y)=&\frac{\pi H(x,y)}{2E\left(\frac{|x-y|}{x+y}\right)}=\frac{\pi H(x,y)}{2E\left(\sqrt{1-\frac{G^2(x,y)}{A^2(x,y)}}\right)}\\=&	\frac{\pi G^2(x,y)}{2\int_0^{\pi/2}\sqrt{A^2(x,y)\cos^2\varphi + G^2(x,y)\sin^2\varphi}d\varphi}.
\end{align*} 
This mean has a nice geometric interpretation: in the ellipsis with semi-axes $G(x,y)$ and $A(x,y)$ it represents the ratio of the area of inscribed disc to its semi-perimeter.

\section{Hermite-Hadamard inequality for means}
The Hermite-Hadamard inequality in its classic form says that if $f$ is a convex function in an interval $I$, then for all $a,b\in I$
$$f\left(\frac{a+b}{2}\right)\leq \frac{1}{b-a}\int_a^b f(t)dt\leq \frac{f(a)+f(b)}{2}.$$
A stronger inequality also holds
$$ \frac{1}{b-a}\int_a^b f(t)dt\leq \frac{1}{2}\left[f\left(\frac{a+b}{2}\right) +\frac{f(a)+f(b)}{2}\right].$$

Suppose now that the mean $N$ is the harmonic representation of $M$ and its Seiffert function $n$ is such that the function $n(u)/u$ is convex. Then, applying the Hermite-Hadamard inequality to \eqref{eq:IO} and taking into account that $\lim_{u\to 0} n(u)/u=1$ we obtain
\begin{equation}
2n(z/2)\leq I(n)(z)\leq \frac{z+n(z)}{2}.
\label{eq:HHI1}
\end{equation}
This yields (with help of  \eqref{eq:mapping})  the inequalities for means
\begin{equation}
H(A(x,y),N(x,y))\leq M(x,y) \leq N\left(\frac{3x+y}{4},\frac{x+3y}{4}\right).
\label{eq:HH for means}
\end{equation}

The stronger version of the Hermite-Hadamard reads in this case:
\begin{equation}
 I(n)(z)\leq \frac{1}{2}\left[2n(z/2)+\frac{z+n(z)}{2}\right],
\label{eq:HHI2}
\end{equation}
which yields
\begin{equation}
 H(A(x,y),N^{\{1/2\}}(x,y),N^{\{1/2\}}(x,y),N(x,y))\leq M(x,y) \leq N\left(\frac{3x+y}{4},\frac{x+3y}{4}\right).
\label{eq:HH2 for means}
\end{equation}
Obviously, if $n(u)/u$ is concave, the inequalities in \eqref{eq:HHI1}--\eqref{eq:HH2 for means} are reversed.

In the above we use the Hermite-Hadamard inequality with the left end fixed, so it may happen that \eqref{eq:HHI1} holds even if $n(u)/u$ is not convex. Of course, in such case an individual treatment would be required.

\section{Examples II}
\begin{example}
Let $N=G$. By Example \ref{ex:1} we know that $M=P$ is the first Seiffert mean. Since $n(u)/u=(1-u^2)^{-1/2}$ is convex and $G^{\{1/2\}}=\sqrt{3A^2+G^2}/2$, \eqref{eq:HH for means} and \eqref{eq:HHI2} yield
$$\frac{2AG}{A+G}\leq 2\left(\frac{2}{\sqrt{3A^2+G^2}}+\frac{A+G}{2}\right)^{-1}\leq P \leq \frac{\sqrt{3A^2+G^2}}{2}.$$
\end{example}
\begin{example}The Seiffert function $c$  from Example \ref{ex:2} does not satisfy the convexity condition, but the reversed inequalities in \eqref{eq:HHI1} hold anyway, by the following lemma.
\begin{lemma}\label{l:ex2}
	The inequalities 
	$$\frac{4u}{4+u^2}>\arctan u > u\frac{2+u^2}{2+2u^2}$$
	hold for $0<u<1$ 
\end{lemma}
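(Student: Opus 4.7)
I would split the lemma into its two component inequalities, since the two halves have quite different analytic character.

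For the right-hand inequality $\arctan u > u(2+u^2)/(2+2u^2)$, I set
$$h(u) = \arctan u - \frac{u(2+u^2)}{2+2u^2}.$$
Then $h(0)=0$, and a routine differentiation, after placing both terms over the common denominator $2(1+u^2)^2$, should reduce to
$$h'(u) = \frac{u^2(1-u^2)}{2(1+u^2)^2},$$
which is strictly positive on $(0,1)$. Monotonicity of $h$ together with $h(0)=0$ then yields the inequality immediately.

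For the left-hand inequality $\arctan u < 4u/(4+u^2)$, I set
$$f(u) = \frac{4u}{4+u^2} - \arctan u,$$
so $f(0)=0$. The analogous computation should simplify to
$$f'(u) = \frac{u^2(4-5u^2)}{(4+u^2)^2(1+u^2)},$$
which, crucially, changes sign exactly once on $(0,1)$, namely at $u_0 = 2/\sqrt{5}$. Thus $f$ increases on $[0,u_0]$ and decreases on $[u_0,1]$, so on the closed interval $[0,1]$ the function $f$ attains its minimum at one of the endpoints. Since $f(0)=0$ and
$$f(1) = \tfrac{4}{5} - \tfrac{\pi}{4} > 0$$
(which reduces to the elementary bound $\pi<16/5$), we conclude $f(u)>0$ on $(0,1)$.

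The main obstacle is precisely the left-hand inequality: the usual strategy of showing that the derivative of the difference has a definite sign fails here, because $f'$ vanishes inside $(0,1)$. What saves the argument is the structural fact that $f'$ has a \emph{single} sign change on $(0,1)$, so $f$ is unimodal and its minimum on $[0,1]$ must be attained at a boundary point; one then only has to verify the two boundary values. If $f'$ had more sign changes this endpoint-minimum shortcut would not apply and a more delicate analysis would be needed.
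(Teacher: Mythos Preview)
Your proof is correct and essentially identical to the paper's: you split the two inequalities, differentiate the difference functions, obtain exactly the same derivative formulas $\frac{u^2(1-u^2)}{2(1+u^2)^2}$ and $\frac{u^2(4-5u^2)}{(4+u^2)^2(1+u^2)}$, and for the left-hand inequality use the single sign change of $f'$ at $2/\sqrt{5}$ together with $f(0)=0$ and $f(1)=4/5-\pi/4>0$. The paper's presentation is slightly terser (it just notes the local maximum and checks $h(1)>0$), but the argument is the same.
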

\begin{proof}
	Let $h(u)=\frac{4u}{4+u^2}-\arctan u$. As $h(0)=0$ and $h'(u)=\frac{u^2(4-5u^2)}{(u^2+1)(u^2+4)^2} $ we see that $h$ has local maximum at $u=2/\sqrt{5}$ and since $h(1)>0$ we conclude that $h(u)>0$.
	
	Let now $h(u)=\arctan u-u\frac{2+u^2}{2+2u^2}$. Then $h(0)=0$ and $h'(u)=\frac{u^2(1-u^2)}{2(x^2+1)^2}>0$, and the proof is complete.
\end{proof}
Thus for the  contraharmonic mean and the second Seiffert mean we have
$$ C^{\{1/2\}}=\frac{5A^2-G^2}{4A}\leq T\leq H(A,C)$$
	
\end{example}
\begin{example}
	The pair $(M,N)=(L,H)$ (see Example \ref{ex:3}) gives the inequalities
	$$\frac{2G^2A}{A^2+G^2}\leq \frac{4AG^2(3A^2+G^2)}{3A^4+12A^2G^2+G^4}\leq L\leq \frac{3A^2+G^2}{4A}$$
\end{example}
 
\begin{example}
For the root-mean square and Neuman-S\'andor means (Example \ref{ex:4}) the convexity condition is not satisfied, but the following lemma shows that the reversed inequalities \eqref{eq:HHI1} are valid.
\begin{lemma}
	For $0<u<1$ the inequalities
	$$\frac{2u}{\sqrt{u^2+4}}\geq \arsinh u\geq \frac{u}{2}+\frac{u}{2\sqrt{u^2+1}} $$ 
	hold.
\end{lemma}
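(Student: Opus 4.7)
The plan is to follow the template of Lemma~\ref{l:ex2}: for each of the two inequalities introduce an auxiliary function $h_i(u)$ with $h_i(0)=0$, then show that $h_i'(u)\ge 0$ on $(0,1)$, which forces $h_i\ge 0$.

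For the upper bound I would set $h_1(u)=\frac{2u}{\sqrt{u^2+4}}-\arsinh u$. A direct differentiation (using $\frac{d}{du}\frac{u}{\sqrt{u^2+4}}=\frac{4}{(u^2+4)^{3/2}}$) gives
$$h_1'(u)=\frac{8}{(u^2+4)^{3/2}}-\frac{1}{\sqrt{u^2+1}}.$$
Since both sides are positive, clearing denominators and squaring reduces $h_1'\ge 0$ to $64(u^2+1)\ge(u^2+4)^3$, and expansion turns this into the elementary inequality $u^4+12u^2\le 16$, which is obvious on $(0,1)$ because $u^4\le 1$ and $12u^2\le 12$.

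For the lower bound I would set $h_2(u)=\arsinh u-\tfrac{u}{2}-\tfrac{u}{2\sqrt{u^2+1}}$. Using $\frac{d}{du}\frac{u}{\sqrt{u^2+1}}=(u^2+1)^{-3/2}$ and putting everything over the common denominator $2(u^2+1)^{3/2}$ gives, after a short simplification,
$$h_2'(u)=\frac{2u^2+1-(u^2+1)^{3/2}}{2(u^2+1)^{3/2}}.$$
Since $2u^2+1>0$, positivity of $h_2'$ is equivalent to $(2u^2+1)^2\ge(u^2+1)^3$; expanding both sides and cancelling common terms yields $u^2(1+u^2-u^4)\ge 0$, which is immediate on $(0,1)$ because $u^4\le u^2<u^2+1$.

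The only real obstacle here is bookkeeping. The derivative of $u/(2\sqrt{u^2+1})$ must be written in a form that combines cleanly with $1/\sqrt{u^2+1}$ and $1/2$; otherwise the final polynomial comparison would look messier than necessary. Once that common denominator is chosen, no conceptual ingredient is needed beyond the $h(0)=0,\ h'\ge 0$ template already used in Lemma~\ref{l:ex2}.
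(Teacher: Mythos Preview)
Your argument is correct and in fact cleaner than the paper's. Both proofs introduce the same auxiliary functions $h_1$, $h_2$ with $h_i(0)=0$ and reduce matters to the sign of $h_i'$; the difference lies in how that sign is established. The paper differentiates once more: for each numerator $p(u)$ it computes $p'(u)=u\,q(u)$, shows $q$ changes sign exactly once on $(0,1)$, deduces that $p$ has a single interior minimum, and then pins down the sign of $p$ from the endpoint values $p(0)=0$ and $p(1)<0$. You instead bypass this second-derivative analysis by squaring the radical inequality directly, obtaining the purely polynomial comparisons $u^4+12u^2\le 16$ and $u^2(1+u^2-u^4)\ge 0$, both of which are immediate on $(0,1)$. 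Your route is shorter and more elementary; the paper's route has the minor advantage of revealing the full qualitative shape of $h_i'$ (and hence how tight the bounds become near $u=1$), but for the bare inequality your squaring trick is the more efficient choice.
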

\begin{proof}
	To prove the left inequality it suffices to show that the function $h(u)=\arsinh u-\frac{2u}{\sqrt{u^2+4}}$ decreases, because $h(0)=0$. Differentiating we obtain
	\begin{equation}
	h'(u)=\frac{(u^2+4)^{3/2}-8(u^2+1)^{1/2}}{(u^2+4)^{3/2}(u^2+1)^{1/2}}.
	\label{eq:h_1}
	\end{equation}

Let $p$ denote the numerator in \eqref{eq:h_1}. Then $p'(u)=u\left(3\sqrt{u^2+4)}-\frac{8}{\sqrt{u^2+1}}\right):=uq(u)$. The function $q$ is a difference of an increasing and decreasing function, thus increases from $q(0)=-2$ to $q(1)=3\sqrt{5}-4\sqrt{2}>0$, so we conclude that $p$ has one local minimum in the interval $(0,1)$. Since $p(0)=0$ and $p(1)=\sqrt{125}-\sqrt{128}<0$ we see that $p(u)<0$ for all $u$, thus $h'(u)<0$ and we are done.

For the right inequality the method is similar:
\begin{align*}
h(u)=\frac{u}{2}+\frac{u}{2\sqrt{u^2+1}}-\arsinh u,	&\quad h'(u)=\frac{(u^2+1)^{3/2}-(2u^2+1)}{2(u^2+1)^{3/2}}	\\
p(u)=(u^2+1)^{3/2}-(2u^2+1),	&\quad p'(u)=u(3\sqrt{u^2+1}-4):=uq(u).	
\end{align*}
As above, $q$ increases from $-1$ to $3\sqrt{2}-4$, so $p$ has one local minimum, and since $p(0)=0$ and $p(1)=\sqrt{8}-3<0$ we conclude $h'<0$.
\end{proof}
Thus for the Neuman-S\'andor mean $M(x,y)=\frac{|x-y|}{2\arsinh \frac{|x-y|}{x+y}}$ the inequality \eqref{eq:HH for means} in this case reads
$$R^{\{1/2\}}=\frac{\sqrt{5A^2-G^2}}{2}\leq M \leq H(A,R).$$
\end{example}
\begin{example}
	In Example \ref{ex:5} we consider the Seiffert functions $m(z)=\sin z$ and $n(z)=z\cos z$. Clearly $n(z)/z$ is concave and thus
	$$\frac{x+y}{2\cos \frac{1}{2}\frac{|x-y|}{x+y}}\leq \frac{|x-y|}{2\sin \frac{|x-y|}{x+y} }\leq \frac{x+y}{1+\cos \frac{|x-y|}{x+y}}$$
\end{example}
\begin{example}
	The function $\frac{1}{\cos^2 z}$ is convex, thus we can apply \eqref{eq:HHI1} to the functions from Example \ref{ex:6} to obtain  
	$$\frac{(x+y)\cos^2 \frac{|x-y|}{x+y}}{1+\cos^2 \frac{|x-y|}{x+y}}\leq \frac{|x-y|}{2\tan \frac{|x-y|}{x+y} }\leq {A(x,y)}{\cos^2 \frac{1}{2}\frac{|x-y|}{x+y}}.$$
\end{example}
\begin{example}
	In Example \ref{ex:7} the function $\cosh$ is convex, so we get
	$$\frac{x+y}{1+\cosh \frac{|x-y|}{x+y}}\leq \frac{|x-y|}{2\sinh \frac{|x-y|}{x+y} }\leq \frac{x+y}{2\cosh \frac{1}{2}\frac{|x-y|}{x+y}}.$$
\end{example}

\begin{example}
	In this example we deal with the $AGM$ mean and its harmonic representation $V$ described in Section \ref{sec:AGM}. The Seiffert mean of $V$ is $v(z)=\frac{2}{\pi}\frac{z}{1-z^2}E(z)$, so
	\begin{equation}
	\frac{v(z)}{z}=\frac{2}{\pi}\int_0^{\pi/2} \frac{\sqrt{1-z^2\sin^2\varphi}}{1-z^2}d\varphi.
	\label{eq:v}
	\end{equation}
We shall show that this function is convex. For $0<a<1$ let $h_a(u)=\frac{\sqrt{1-au^2}}{\sqrt{1-u^2}}$. Then
$$h_a'(u)=\frac{(1-a)u}{(1-au^2)^{1/2}(1-u^2)^{3/2}}.$$
Note the $h_a'$ is nonnegative and increasing, since its numerator increases while denominator decreases. Thus $h_a$ is positive, increasing and convex. The function $g(u)=1/\sqrt{(1-u^2)}$ shares the same properties, so their product is convex \cite[Theorem I.13C]{VR}. Since the integrands in \eqref{eq:v} are convex, so is the left-hand side.
Therefore by \eqref{eq:HH for means}
$$\frac{2AV}{A+V}\leq AGM\leq V^{\{1/2\}}.$$
\end{example}

==================================================================================================================
==================================================================================================================


\begin{thebibliography}{10}



\bibitem{BB}
J.M.~Borwein, P.B.~~Borwein, {\em Pi and the AGM}, John Wiley \& Sons, New York 1987. 
\bibitem{BF}
P.F.~~Byrd, M.D.~~Friedman, {\em Handbook of Elliptic Integrals for Engineers and Scientists}, Springer, New York, 1971.
\bibitem{Ga}
C.F.~  Gauss,   {\em  Werke,}  Bd.  3,  K\"oniglichen  Gesell.  Wiss., G\"ottingen,  1876,  pp.  361--403.\texttt{}
\bibitem{VR} 
A.W. Rogers, D.E.Varberg, {\em Convex Functions}, Academic Press, New York and London, 1973
\bibitem{AW2}
A.~Witkowski.
\newblock {On Seiffert--like means.}
\newblock {\em arXiv:1309.1244  [math.CA]}, June 2013.
\end{thebibliography}
\end{document}